\newtheorem{theorem}{Theorem}
\newcommand{\dif}{\,\mathrm{d}}
\begin{document}

\title{Approximate calculation of operator semigroups by perturbation of generators\footnote{This is the translation from Ukrainian of the communication published in \emph{Dopovidi Natsionalno\u{i}i Akademii Nauk Ukrainy} [Reports Nat. Acad. Sci. Ukr.], 2003, No.11, p. 27--32.}}
\author{A. Yurachkivsky, A. Zhugayevych\\ \\
\textit{Radiophysics Faculty, National Taras Shevchenko University of Kyiv, Ukraine}}
\maketitle

\begin{abstract}
Let $\Omega$ be an operator semigroup with generator $A$ in a sequentially complete locally
convex topological vector space $E$. For a semigroup with generator $A+D$, where $D$ is a
bounded linear operator on $E$, two integral equations are derived. A theorem on continuous
dependence of a semigroup on its generator is proved. An application to random walk on
$\mathbb{Z}$ is given.

\medskip
MSC: Primary 47D06; Secondary 34G10, 34K06.

Keywords: semigroup, generator, perturbation, random walk.
\end{abstract}

Let $\Omega$, $\Omega'$ be operator semigroups \cite{Hille,
Yosida,Pazy83} (in this article we consider only one-parameter
semigroups) with generators $A$, $A'$ respectively and let
$D=A'-A$. It is known \cite[p.~77]{Pazy83}, that under appropriate
assumptions
\begin{align}
\Omega' &=\Omega +\Omega D\ast \Omega',  \label{PerturbF1} \\
\Omega' &=\Omega +\Omega'\ast D\Omega ,  \label{PerturbF2}
\end{align}
where $\ast $ signifies convolution, in this case -- of
operator-valued functions. We call these equalities the
\emph{perturbation formulae}.

In the present paper we prove the perturbation formulae under more
general than in \cite{Pazy83} assumptions, then deduce from them
the strongly continuous dependence of a semigroup on its generator
and, finally, apply these results to the approximate calculation
of operator semigroups in coordinate spaces. The idea of the use
of the perturbation formulae is to regard the equalities
(\ref{PerturbF1}) and (\ref{PerturbF2}) as equations for the
unknown function $\Omega'$.

An important example of a semigroup is the transition probability
of Markov process. For this case the perturbation formulae were
discovered and are systematically exploited by Portenko
\cite{Portenko82,Portenko95}. One can find ibidem also a
generalization of these formulae for an inhomogeneous process (its
transition probability creates a two-parameter semigroup).

Denote the space where the semigroups are defined by $E$. Below,
$A_1$ stands for $A'$ and $A_0$ stands for $A$ (likewise
$\Omega_1$ and $\Omega_0$). We impose the usual in the theory of
operator semigroups assumptions:
\renewcommand{\theenumi}{\roman{enumi}}
\renewcommand{\labelenumi}{(\theenumi)}
\begin{enumerate}
\item\label{C1}
$E$ is a sequentially complete locally convex topological vector space;
\item\label{C2}
$A_i$ ($i\in\{0,1\}$) is densely defined linear operator such
that for some $\lambda_0\in\mathbb{R}$ the resolvent
$R_i(\lambda_0)=(\lambda_0\mathbf{1}-A_i)^{-1}$ is defined on the
whole $E$ and the family $\{(\lambda R_i(\lambda))^m:\lambda\geq
1,m\in\mathbb{N}\}$, is equicontinuous.
\end{enumerate}

We remind that the equicontinuity of a family
$\{f_{\theta},\theta\in\Theta\}$ of maps from $E$ to $E$ means
\cite{Yosida} that for any continuous on $E$ seminorm $\|\cdot\|$
there exists a continuous seminorm $\|\cdot\|'$ such that for all
$\theta\in\Theta$ and $q\in E$   $\|f_{\theta}(q)\|\leq\|q\|'$.

\begin{theorem}
Let conditions \textup{(\ref{C1})} and \textup{(\ref{C2})} be fulfilled and the operator $D=A_1-A_0$ be
defined and continuous on the whole $E$. Then for any $t\geq 0$
\begin{align}
\Omega_1(t)&=\Omega_0(t)+\int_0^t\Omega_0(t-\tau)D\Omega_1(\tau)\dif \tau,  \label{Th1_1} \\
\Omega_1(t)&=\Omega_0(t)+\int_0^t\Omega_1(t-\tau)D\Omega_0(\tau)\dif \tau.  \label{Th1_2}
\end{align}
\end{theorem}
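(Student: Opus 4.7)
The plan is to reduce both identities to a single derivative computation. Since $D$ is continuous on all of $E$, the equality $A_1=A_0+D$ forces $D(A_1)=D(A_0)$; call this common domain $\mathcal{D}$, which is dense in $E$ by assumption (ii). On $\mathcal{D}$ each semigroup is strongly differentiable with $\frac{d}{ds}\Omega_i(s)x=\Omega_i(s)A_ix=A_i\Omega_i(s)x$. The two perturbation formulae should therefore follow from the product rule applied to $u(\tau)=\Omega_0(t-\tau)\Omega_1(\tau)x$ for \eqref{Th1_1} and to $v(\tau)=\Omega_1(t-\tau)\Omega_0(\tau)x$ for \eqref{Th1_2}.

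First I would fix $t>0$, $x\in\mathcal{D}$, and verify that $u$ is strongly differentiable on $[0,t]$ with
\begin{equation*}
u'(\tau)=-\Omega_0(t-\tau)A_0\Omega_1(\tau)x+\Omega_0(t-\tau)A_1\Omega_1(\tau)x=\Omega_0(t-\tau)D\Omega_1(\tau)x.
\end{equation*}
This is the usual product-rule calculation with difference quotients; the only point requiring care in the locally convex setting is to control the remainders in every continuous seminorm, for which I would invoke the fact (standard in the Komatsu--Yosida framework) that the resolvent equicontinuity in (ii) implies equicontinuity of $\{\Omega_i(s):s\in[0,T]\}$ for each $T\geq 0$. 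The map $\tau\mapsto u'(\tau)$ is then continuous from $[0,t]$ to $E$, hence Riemann integrable thanks to sequential completeness of $E$, and integrating yields $\Omega_1(t)x-\Omega_0(t)x=u(t)-u(0)=\int_0^t\Omega_0(t-\tau)D\Omega_1(\tau)x\dif\tau$, i.e.\ \eqref{Th1_1} on $\mathcal{D}$.

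Finally I would extend \eqref{Th1_1} from $\mathcal{D}$ to all of $E$ by density. Given $x\in E$ and a net $x_\alpha\to x$ with $x_\alpha\in\mathcal{D}$, the left-hand side converges because $\Omega_0(t),\Omega_1(t)$ are continuous on $E$, and the right-hand side converges once one notes that the family $\{\Omega_0(t-\tau)D\Omega_1(\tau):\tau\in[0,t]\}$ is equicontinuous on $E$, which allows passage to the limit under the integral. Formula \eqref{Th1_2} is obtained by repeating the argument with $v$ in place of $u$ (the sign from the inner derivative and the order of factors both reverse, producing the integrand $\Omega_1(t-\tau)D\Omega_0(\tau)x$). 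The main obstacle is technical rather than conceptual: one must verify that every step of the classical Banach-space argument transfers to a sequentially complete locally convex $E$, in particular that difference quotients converge in the topology of $E$, that the vector-valued Riemann integral is well defined and behaves linearly, and that the equicontinuity extracted from (ii) is uniform enough to bound all the operator families appearing simultaneously.
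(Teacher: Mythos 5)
Your argument is correct in outline, but it proves the theorem by a genuinely different route from the paper. You use the classical interpolation trick: differentiate $u(\tau)=\Omega_0(t-\tau)\Omega_1(\tau)x$ on the common dense domain, integrate $u'$ over $[0,t]$, and extend by density and equicontinuity. The paper instead works entirely in the Laplace domain: it observes that, once the semigroups are known (via Hille--Yosida in the Komatsu--Yosida setting) to be equicontinuous after the shift $\tilde A_i=A_i-\lambda_0\mathbf{1}$, the time-domain convolution identity \eqref{Th1_1} is equivalent to the resolvent identity $R_1(\lambda)=R_0(\lambda)+R_0(\lambda)DR_1(\lambda)$, which it verifies by pure algebra from $A_iR_i(\lambda)=\lambda R_i(\lambda)-\mathbf{1}=R_i(\lambda)A_i$ on the domain, then extends to $E$ by the equicontinuity in (ii); formula \eqref{Th1_2} follows by swapping the roles of $A_0$ and $A_1$, just as you obtain it by swapping the factors in $v$. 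Each approach has a cost the other avoids: the paper's resolvent computation is a two-line algebraic identity, but the claimed equivalence between \eqref{Th1_1} and the resolvent identity silently relies on the Laplace transform converting convolution to composition and on its injectivity on the relevant class of operator-valued functions, both of which need Fubini-type justification in a locally convex space; your route avoids Laplace transforms entirely but must carry out the difference-quotient bookkeeping (splitting the increment of $u$ into the two one-sided increments, using $\Omega_1(\tau)x\in\mathcal{D}(A_0)$, and controlling remainders seminorm by seminorm via the local equicontinuity of $\{\Omega_i(s):s\le T\}$). Two points you should make explicit to close your version: first, the equicontinuity you invoke is only of $\{e^{-\lambda_0 s}\Omega_i(s):s\ge 0\}$ when $\lambda_0>0$, so you should either restrict to compact time intervals (as you do) or perform the same shift the paper does; second, in the density-extension step you need that the family $\{\Omega_0(t-\tau)D\Omega_1(\tau):\tau\in[0,t]\}$ is equicontinuous, which does follow from composing the two locally equicontinuous semigroup families with the single continuous operator $D$, but deserves a sentence.
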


\begin{proof}
Since $D$ is defined everywhere,  $A_0$ and $A_1$ have the same
domain which we denote $S$.

Let, at first, condition (\ref{C2}) hold for some $\lambda_0\leq 0$. Then
the Hille--Yosida theorem \cite[IX.7]{Yosida} asserts that,
firstly, the operators $A_i$ generate equicontinuous semigroups
$\Omega_i$ strongly continuously depending on $t$ and, secondly,
$R_i(\lambda)=\int_0^{\infty}e^{-\lambda t}\Omega_i(t)\dif t$.
Consequently, equality (\ref{Th1_1}) holds for all $t\geq 0$ iff
the equality
\begin{equation}
R_1(\lambda)=R_0(\lambda)+R_0(\lambda)D R_1(\lambda)  \label{Th1_3}
\end{equation}
holds for all $\lambda\geq\lambda_0$. To prove the latter we note that by the definition of resolvent
$A_i R_i(\lambda)=\lambda R_i(\lambda)-\mathbf{1}=R_i(\lambda)A_i$ on $S$ so that
$R_0 D R_1 \equiv R_0 A_1 R_1-R_0 A_0 R_1 =R_0(\lambda R_1-\mathbf{1})-(\lambda R_0-\mathbf{1})R_1$.
This proves (\ref{Th1_3}) for the restriction of its both sides on $S$. Assumption (\ref{C2}) allows to extend it to $E$.

In case $\lambda_0>0$ the same argument applied to the operators $\tilde{A}_i=A_i-\lambda_0\mathbf{1}$ yields
(\ref{Th1_1}) for $\tilde{\Omega}_i$ thereafter it
remains to note that, obviously, $\tilde{\Omega}_i(t)=e^{-\lambda_0 t}\Omega_i(t)$.

Swapping $A_1$ and $A_0$, we convert (\ref{Th1_1}) to
(\ref{Th1_2}).
\end{proof}

As the first application of the perturbation formulae we will
prove that the correspondence between a generator and  the
generated semigroup is continuous w.r.t. the strong topology. The
distinction of this result from the Trotter--Kato theorem
(\cite[IX.12]{Yosida}, \cite[IX.2.16]{Kato}) is that the condition
for convergence of a sequence of semigroups is formulated in terms
of generators rather than resolvents.

\begin{theorem}
Assume that: the space $E$ satisfies condition \textup{(\ref{C1})}, and the operators $A_n$ $(n\in\mathbb{Z}_{+})$ satisfy condition \textup{(\ref{C2})}; for each $n\in\mathbb{Z}_{+}$ the operator $B_n\equiv A_n-A$ is defined and bounded
on the whole $E$; for any $q\in E$
\begin{equation}
B_n q\rightarrow 0  \label{Th2_1}
\end{equation}
as $n\rightarrow\infty$. Then for arbitrary $t>0$, $q\in E$ and any seminorm $\|\cdot\|$ among those generating
together the topology of the space $E$
\begin{equation*}
\lim_{n\rightarrow\infty}\sup_{s\leq t}\|\Omega_n(s)q-\Omega(s)q\|=0.
\end{equation*}
\end{theorem}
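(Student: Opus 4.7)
The natural plan is to apply the perturbation formula of Theorem~1 with $A_0 = A$ and $A_1 = A_n$, so that $D = B_n$. Equation (\ref{Th1_2}) then gives, for every $s \in [0,t]$,
\[
\Omega_n(s)q - \Omega(s)q \;=\; \int_0^s \Omega_n(s-\tau)\,B_n\,\Omega(\tau) q \dif \tau.
\]
I would choose this form of the perturbation formula rather than (\ref{Th1_1}) because it exposes the known curve $\tau \mapsto \Omega(\tau)q$ inside the integral, while the unknown $\Omega_n$ appears only as the outer factor, where its equicontinuity suffices to control it without knowing its limit behaviour.

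Fix $t > 0$, $q \in E$, and a continuous seminorm $\|\cdot\|$. The image $K = \{\Omega(\tau)q : \tau \in [0,t]\}$ is a compact subset of $E$ by strong continuity. Condition (\ref{C2}) together with the Hille--Yosida theorem (the rescaling argument already used in the proof of Theorem~1) furnishes a continuous seminorm $\|\cdot\|^{*}$ with $\|\Omega_n(u) v\| \leq \|v\|^{*}$ for all $u \in [0,t]$, all $v \in E$, and---crucially---uniformly in $n$. Applying $\|\cdot\|$ to the identity above would then yield
\[
\sup_{s \leq t} \|\Omega_n(s) q - \Omega(s) q\| \;\leq\; t \sup_{v \in K} \|B_n v\|^{*},
\]
reducing the theorem to showing the right-hand side tends to zero.

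For that last step I would argue by compactness: assuming the family $\{B_n\}$ is equicontinuous, pick a continuous seminorm $\|\cdot\|^{**}$ with $\|B_n u\|^{*} \leq \|u\|^{**}$ for all $n$ and $u$; given $\varepsilon > 0$, cover $K$ by finitely many $\|\cdot\|^{**}$-balls of radius $\varepsilon/2$ around points $v_1,\dots,v_N$, estimate any $v \in K$ via the nearest centre, and invoke (\ref{Th2_1}) at those finitely many centres. The main obstacle is precisely the two uniformity claims invoked above: that the equicontinuity of $\{\Omega_n(u)\}$ is uniform in $n$, and that the $B_n$ form an equicontinuous family. Both are Banach--Steinhaus-type statements which, in a sequentially complete locally convex $E$ (barrelled, or endowed with a uniform-in-$n$ version of condition (\ref{C2})), turn the pointwise hypotheses---(\ref{Th2_1}) and the per-index form of (\ref{C2})---into the uniform bounds that the perturbation identity then converts into the desired uniform convergence on $[0,t]$.
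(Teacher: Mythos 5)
Your starting point coincides with the paper's: apply the perturbation formula (\ref{Th1_2}) with $D=B_n$ to obtain $\Omega_n(s)q-\Omega(s)q=\int_0^s\Omega_n(s-\tau)B_n\Omega(\tau)q\dif\tau$, after which everything hinges on two uniformity claims. The genuine gap is in the first of them: you assert that Hille--Yosida furnishes a continuous seminorm $\|\cdot\|^{*}$ with $\|\Omega_n(u)v\|\leq\|v\|^{*}$ \emph{uniformly in $n$}. It does not: condition (\ref{C2}) is imposed on each $A_n$ separately, so Hille--Yosida yields equicontinuity of each semigroup $\Omega_n$ with a majorizing seminorm that may depend on $n$; and Banach--Steinhaus cannot repair this, because it needs pointwise boundedness of $\{\Omega_n(u)q:n\in\mathbb{Z}_+,\,u\leq t\}$ as a hypothesis, which is not available a priori. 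The paper closes exactly this hole with a step your proposal lacks: it applies the seminorm to the perturbation identity a second time to get the integral inequality (\ref{Th2_3}) for $w_n(s)=\|\Omega_n(s)\|$, iterates it (a Gronwall-type argument) to obtain $W_n(t)\leq W(t)e^{\|B_n\|W(t)}$, and combines this with the Banach--Steinhaus bound $\sup_n\|B_n\|<\infty$ of (\ref{Th2_2}) --- which \emph{is} legitimate, since (\ref{Th2_1}) does give pointwise boundedness of $\{B_n\}$ --- to conclude $\sup_nW_n(t)<\infty$. This self-bootstrapping use of the same perturbation formula is the key idea of the proof and cannot be replaced by a citation of Hille--Yosida or by an unproved ``uniform-in-$n$ version of condition (\ref{C2})''.

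Your treatment of the final limit differs from the paper's but is workable once the uniform bound on $W_n(t)$ is secured: you cover the compact curve $K=\{\Omega(\tau)q:\tau\leq t\}$ by finitely many balls and use equicontinuity of $\{B_n\}$, which here does follow from Banach--Steinhaus. The paper instead keeps the quantity $W_n(t)\int_0^t\|B_n\Omega(\tau)q\|\dif\tau$ from (\ref{Th2_4}) and applies dominated convergence: the integrand tends to $0$ pointwise in $\tau$ by (\ref{Th2_1}) and is dominated thanks to (\ref{Th2_2}). Either route is fine; the substantive defect of your proposal is solely the unjustified uniform equicontinuity of the perturbed semigroups.
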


\begin{proof}
From (\ref{Th2_1}) we deduce by the Banach--Steinhaus theorem \cite[Theorem 2.6]{Rudin} that
\begin{equation}
\sup_n\|B_n\|<\infty.  \label{Th2_2}
\end{equation}

Denote $g_n=\Omega_n q$, $w_n=\|\Omega_n\|$, $W_n(t)=\sup_{s\leq
t}w_n(s)$ (similarly $g$, $w$, $W$), $V_n=\|B_n\|W_n$.
Substituting in (\ref{PerturbF2}) $\Omega'$ by $\Omega_n$ and $D$
by $B_n$, we get by the properties of seminorm
\begin{gather}
w_n(s) \leq w(s)+V_n(s)\int_0^s w_n(\tau)\dif \tau,  \label{Th2_3} \\
\sup_{s\leq t}\|g_n(s)-g(s)\| \leq W_n(t)\int_0^t\|B_n g(\tau)\|\dif \tau.  \label{Th2_4}
\end{gather}

Having iterated inequality (\ref{Th2_3}) once, we get
\begin{align*}
w_n(s) &\leq w(s)+V_n(s)\int_0^s\left(w(\tau)+V_n(\tau)\int_0^{\tau}w_n(p)\dif p\right)\dif\tau \\
&\leq w(s)+V_n(s)\int_0^s w(\tau)\dif \tau+V_n(s)^2\int_0^s(s-p)w_n(p)\dif p.
\end{align*}
Iterating further, we obtain in the limit
\begin{equation*}
w_n(s)\leq w(s)+V_n(s)\int_0^s e^{V_n(s)(s-\tau)}w(\tau)\dif \tau,
\end{equation*}
whence
\begin{equation*}
W_n(t)\leq W(t)e^{\|B_n\|W(t)}.
\end{equation*}
The last equality together with (\ref{Th2_2}) shows that $\sup_n
W_n(t)<\infty$. Now we deduce the conclusion of the theorem from
(\ref{Th2_4}) and (\ref{Th2_1}), relying on the dominated
convergence theorem applicable due to (\ref{Th2_2}).
\end{proof}

The general scheme of applying Theorems 1 and 2 is as follows.
Assume that the semigroup with generator $A_0$ can be calculated
explicitly (in infinite-dimensional spaces this situation is rare
but possible --- a nontrivial example will be given below) and
there exists a strongly converging to the generator $A$ sequence
$(A_n,n\in\mathbb{N})$ of generators such that for each $n$ the
operators $B_n=A_n-A$ and $D_n=A_n-A_0$ (and therefore $A-A_0$)
are defined on the whole space and continuous. If herein the
operators $D_n$ are sufficiently simple (in some sense simpler
than $A-A_0$), then equation (\ref{Th1_1}) or (\ref{Th1_2}) can
prove quite manageable, which will be illustrated below. It
enables us to explicitly calculate the semigroup $\Omega_n$ for
any $n$. Theorem 2 asserts that, for sufficiently large $n$, we
thus obtain a satisfactory approximation of the semigroup
$\Omega$. Unfortunately, this theorem (as well as the
above-mentioned result of Trotter and Kato) does not give the rate
of convergence and, consequently, the exact rule for the choice of
$n$. So, it is expedient, while applying the theorem, to calculate
some initial segment of the sequence $(\Omega_n)$ rather than a
single semigroup. In this case it is simpler to perturb on the
$n$th step $A_{n-1}$ rather than $A$, that is to put
$D_n=A_n-A_{n-1}$.

Let us refine the obtained results for vector topological spaces
of numeral sequences on a finite or countable set $X$ (coordinate
spaces). Let $E$ be some coordinate space and $E'$ be its dual. We
denote likewise a linear continuous operator $A$ on $E$ and its
adjoint operator on $E'$, discerning them by the argument
position: to the right from $A$ in the first case and to the left
in the second one. If $E'$ consists not only of numeral sequences,
but the subspace $E_0'\equiv E'\cap\mathbb{R}^X$ is invariant
w.r.t. the action of the adjoint operator, then we consider the
latter as an operator on this, coordinate restriction of the dual
space. Then both the operator $A$ and its adjoint are determined
by the function on $X^2$ (their \textit{coordinate
representation}):
\begin{align*}
(Aq)(x) &=\sum_y A(x,y)q(y),\ q\in E, \\
(pA)(y) &=\sum_x p(x)A(x,y),\ p\in E_0'.
\end{align*}

In the terminology of differential equations the semigroup
$\Omega$ generated by the operator $A$ is the fundamental
operator, or matrizant, of the system of differential equations
for numeral functions $q(x,\cdot)$:
\begin{equation}
\dot{q}(x,t)=\sum_y A(x,y)q(y,t),\ x\in X,  \label{DiffEq_coord}
\end{equation}
which is tantamount to the single equation
\begin{equation}
\dot{q}=Aq  \label{DiffEq}
\end{equation}
for an $\mathbb{R}^X$-valued function.

Let us find the coordinate representation of the fundamental
operator in the case when $E=l_1(X)$ (the choice of the space will
be explained below) and $A$ is bounded, that is,
\begin{equation}
\|A\|\equiv\sup_y\sum_x|A(x,y)|<\infty.  \label{A_norm}
\end{equation}

Obviously, the solution of equation (\ref{DiffEq}) with initial
condition $q(0)=q_0\in l_1$ is an $l_1$-valued function and
\begin{equation}
\|q(t)\|_1\leq\|q_0\|_1 e^{\|A\|t},  \label{DiffEq_norm}
\end{equation}
where $\|\cdot\|_1$ is the norm in the space $l_1$.

Let $G(\cdot,y,\cdot)$ denote the solution of system
(\ref{DiffEq_coord}) with depending on a parameter $y$ initial
condition
\begin{equation}
G(x,y,0)=\delta(x,y)  \label{G_inicond}
\end{equation}
($\delta$ is Kronecker's function). According to (\ref{DiffEq}) and (\ref{DiffEq_norm})
$\sup_y\sum_x|\dot{G}(x,y,t)|\leq\|A\|e^{\|A\|t}$. This together with (\ref{A_norm}) means that
for any $q_0\in l_1$ the series $\sum_y G(x,y,t)q_0(y)$ admits term-by-term differentiation. So we obtain
the following equation for its sum $q(x,t)$:
\begin{equation*}
\dot{q}(x,t)=\sum_y\dot{G}(x,y,t)q_0(y)=\sum_y\sum_{\xi}A(x,\xi)G(\xi,y,t)q_0(y).
\end{equation*}
Permuting the summations, we become convinced that the functions
$q(x,\cdot)$ satisfy the system of equations (\ref{DiffEq_coord}).
This jointly with the obvious equality $q(x,0)=q_0(x)$ shows that
$G(\cdot,\cdot,t)$ is the coordinate representation of the
operator $\Omega(t)$. We denoted it otherwise than the operator
itself because one can regard the system (\ref{DiffEq_coord}) as a
single differential-difference equation for $q(\cdot,\cdot)$, in
which case $G$ is naturally interpreted as Green's function.

A similar argument shows that $G(x,\cdot,\cdot)$ is the solution
of the conjugate to (\ref{DiffEq_coord}) system
\begin{equation}
\dot{p}(y,t)=\sum_x p(x,t)A(x,y),\ y\in X,  \label{DiffEq2_coord}
\end{equation}
with depending on a parameter $x$ initial condition
(\ref{G_inicond}).

The choice of coordinate space is determined by mathematical or
physical reasons which may be irrelevant to functional analysis.
For example, in probability theory, the differential equations for
semigroups $\dot{\Omega}=A \Omega$, $\dot{\Omega}=\Omega A$
correspond to backward and forward Kolmogorov's equations for the
transition probability matrix of a homogeneous Markov chain. In
this case it is naturally to set $E=l_{\infty}(X)$, where $X$ is
the state space of the chain. Then the dual $E'$ has $l_1$ as its
subspace and $E'_0=l_1$. This choice of the space together with
additional assumptions about $A$ allows to interpret the solution
of (\ref{DiffEq2_coord}) as the one-dimensional distribution of
the chain. In statistical physics, the set (\ref{DiffEq2_coord})
is called (again under some assumptions about $A$) a master
equation and $p$ has the physical meaning of concentration
(density). The latter must be bounded but need not be
summable. In this case $E$ should be matched in such a way that
$E'_0=l_{\infty}$. This is ensured by the choice $E=l_1$ (so that
$E'_0=E'$). In what follows we consider just these spaces.

Denoting
\begin{align}
F(x,y) &=\sum_{\eta}G(x,\eta)D(\eta,y),  \label{def_F} \\
H(x,y) &=\sum_{\xi }D(x,\xi )G(\xi ,y),  \label{def_H}
\end{align}
we  rewrite equalities (\ref{PerturbF1}) and (\ref{PerturbF2}) in
the coordinate form
\begin{align}
G'(x,y) &=G(x,y)+\sum_{\xi } F(x,\xi )\ast G'(\xi ,y),  \label{PerturbF1viaF} \\
G'(x,y) &=G(x,y)+\sum_{\eta}G'(x,\eta)\ast  H(\eta,y).  \label{PerturbF2viaH}
\end{align}
These are countable sets of integral equations with respect to the
functions $G(x,y)$ of argument $t$ suppressed in notation. In the
general case they are not simpler than (\ref{DiffEq_coord}) and
(\ref{DiffEq2_coord}). But if the operator $D$ or its adjoint is
finite-dimensional then in order to solve them it  suffices to
find the resolvent of a finite system of integral equations.
Indeed, denote $\Xi_1=\{\eta:\exists\xi\ D(\xi,\eta)\neq 0\}$,
$\Xi_2=\{\xi:\exists\eta\ D(\xi,\eta)\neq 0\}$, so that
$F(x,\eta)=0$ as $\eta\notin\Xi_1$, $H(\xi,y)=0$ as
$\xi\notin\Xi_2$. Restrict in (\ref{PerturbF1viaF}) the range of
$x$ by $\Xi_1$ and in (\ref{PerturbF2viaH}) the range of $y$ by
$\Xi_2$. Then for every $y$ (\ref{PerturbF1viaF}) is a system of
equations w.r.t. the functions $G'(x,y)$, $x\in\Xi_1$, of argument
$t$; for every $x$ (\ref{PerturbF2viaH}) is a system of equations
w.r.t. $G'(x,y)$, $y\in\Xi_2$. The kernels of the equations and,
consequently, the resolvents do not depend on $y$ in the first
case and $x$ in the second one. Having found one of the two
resolvents, we express via it those functions that enter the
system, thus converting equality (\ref{PerturbF1viaF}) or
(\ref{PerturbF2viaH}) to the explicit formula for the rest of
$G'(x,y)$.

The idea of the use of the perturbation formulae is that,
according to Theorem 2, the solution of an infinite system of
differential equations (\ref{DiffEq_coord}) with bounded $A$ can be
approximated with a desired accuracy by the solution of its finite
subsystem. Even if the operator $A$ is unbounded, but we are able
to solve the set of equations
\begin{equation}
\dot{q}_0(x,t)=\sum_y A_0(x,y)q_0(y,t),\ x\in X, \label{DiffEq0_coord}
\end{equation}
with such an operator $A_0$ that the operator $A-A_0$ is bounded
and both $A$ and $A_0$ satisfy condition (\ref{C2}), then the
solution of (\ref{DiffEq_coord}) can be approximated with a
desired accuracy by the solution of the set of equation finitely
perturbed from (\ref{DiffEq0_coord}). The accuracy can be
evaluated by the perturbation formulae.

As an example we consider a spatially inhomogeneous random walk on
$\mathbb{Z}$. Forward Kolmogorov's equation for the probability
$p(y,t)$ for the walking particle to be at state $y$ at time $t$
has the form (\ref{DiffEq2_coord}) with operator $A$ which has the
following nonzero entries:
\begin{equation*}
A(x,x+1)=\lambda(x),\quad  A(x,x-1)=\mu(x),\quad  A(x,x)=-\lambda(x)-\mu(x),
\end{equation*}
where $\lambda(x)$ and $\mu(x)$ are the intensities of transitions
from $x$ to $x+1$  and $x-1$, respectively (we assume that the
functions $\lambda$ and $\mu$ are  bounded).  For the operator
$A_0$ corresponding to spatially homogeneous walk
($\lambda(x)=\mu(x)=1$) the transition probability is given by the well-known formula  \cite[Formula II.7.7]{Feller}
\begin{equation*}
G_0(x,y,t)=e^{-2t}I_{|x-y|}(2t),
\end{equation*}
where $I_n$ is the modified Bessel function of order $n$. Thus,
for any random walk whose transition intensities are distinct from
unity only for a  finite set of states, we are able to calculate
the transition probabilities exactly. To this end we pass to
Laplace transforms and solve in the Laplace domain the set of
equations (\ref{PerturbF1viaF}) which under the restriction
$x\in\Xi$  will be a finite set of linear equations. The same
formula (\ref{PerturbF1viaF}) will give the answer in Laplace
domain for any $x,y\in\mathbb{Z}$. The inverse Laplace transform
of the result, that is the function $G'(x,y,t)$, turns out to be a
finite linear combination of modified Bessel functions of argument
$2t$ whose orders do not exceed $|x-y|$ (the coefficients are
algorithmically computable).

\end{document}